\newtheorem{thm}{Theorem}[section]
\newtheorem*{thm*}{Theorem}
\newtheorem{cor}[thm]{Corollary}
\newtheorem{prop}[thm]{Proposition}
\newtheorem*{prop*}{Proposition}
\newtheorem*{conj*}{Conjecture}
\newtheorem{defn}[thm]{Definition}
\newtheorem*{defn*}{Definition}
\theoremstyle{definition}
\newtheorem{rem}[thm]{\textbf{Remark}}
\newtheorem*{rmk*}{Remark}
\newtheorem*{fact*}{Fact}
\theoremstyle{proof}
\newcommand{\CD}{\text{CD}}
\newcommand{\II}{\text{II}}
\newcommand{\LogHess}{\text{LogHess}}
\newcommand{\Ric}{\text{Ric}}
\newcommand{\norm}[1]{\left\Vert#1\right\Vert}
\newcommand{\abs}[1]{\left\vert#1\right\vert}
\newcommand{\set}[1]{\left\{#1\right\}}
\newcommand{\brac}[1]{\left(#1\right)}
\newcommand{\scalar}[1]{\left \langle #1 \right \rangle}
\newcommand{\sscalar}[1]{\langle #1 \rangle}
\newcommand{\Real}{\mathbb{R}}
\newcommand{\eps}{\varepsilon}
\newlength{\defbaselineskip}
\newcommand{\setlinespacing}[1]           {\setlength{\baselineskip}{#1 \defbaselineskip}}
\numberwithin{equation}{section}
\begin{document}

\title{Harmonic Measures on the Sphere via Curvature-Dimension}
\author{Emanuel Milman\textsuperscript{1}\\ \\
\emph{Warmly dedicated to Dominique R. Bakry with great admiration}}
\date{}

\footnotetext[1]{Department of Mathematics,
Technion - Israel Institute of Technology, Haifa 32000, Israel. Supported by ISF (grant no. 900/10), BSF (grant no. 2010288) and Marie-Curie Actions (grant no. PCIG10-GA-2011-304066).
Email: emilman@tx.technion.ac.il.}

\maketitle

\begin{abstract}
We show that the family of probability measures on the $n$-dimensional unit sphere, having density proportional to:
\[
S^n \ni y \mapsto \frac{1}{\abs{y - x}^{n+\alpha}} ,
\]
satisfies the Curvature-Dimension condition $CD(n-1-\frac{n+\alpha}{4},-\alpha)$, for all $\abs{x} < 1$, $\alpha \geq -n$ and $n\geq 2$. The case $\alpha = 1$ corresponds to the hitting distribution of the sphere by Brownian motion started at $x$ (so-called ``harmonic measure" on the sphere). 
Applications involving isoperimetric, spectral-gap and concentration estimates, as well as potential extensions, are discussed. 
\end{abstract}

\section{Introduction}

In this note, we consider the following family of probability measures on the unit sphere $S^{n}$ in Euclidean space $(\Real^{n+1},\abs{\cdot})$:
\begin{equation} \label{eq:intro-family}
\mu_{x}^{n,\alpha} = \frac{c_x^{n,\alpha}}{\abs{y - x}^{n+\alpha}} d\sigma^n(y) ,
\end{equation}
where $n \geq 2$, $\abs{x} < 1$, $\alpha \in \Real$, $\sigma^n$ denotes the Haar probability measure on $S^n$ and $c_x^{n,\alpha} > 0$ is a normalization constant. In the case $\alpha=1$ we have $c_x^{n,1} = 1- \abs{x}^2$, and it is well known (e.g. \cite{SchechtmanSchmucky}) that $\mu_x^{n,1}$ is precisely the harmonic measure on $S^n$, characterized as the hitting distribution of $S^n$ by standard Brownian motion started at $x$, or equivalently, as the measure whose density is the Poisson kernel for the Laplace equation in $B^{n+1} = \set{x \in \Real^{n+1} ; \abs{x} < 1}$ with Dirichlet boundary conditions on $S^n = \partial B^{n+1}$. Denoting by $g$ the canonical Riemannian metric on $S^n$, the triplet $(S^n,g,\mu_{x}^{n,\alpha})$ constitutes a weighted Riemannian manifold. 

Clearly, given $n$, $\alpha$ and $\abs{x}$, the density of such measures on $y \in S^n$ only depends on the angle $\theta(y) \in [0,\pi]$ given by
$\cos(\theta(y)) = \sscalar{y, x/\abs{x}}$,
and so various functional and concentration properties of these measures may be reduced to the study of the one-dimensional measure $\theta_*(\mu_x^{n,\alpha})$, whose density on $[0,\pi]$ is proportional to:
\begin{equation} \label{eq:1d-density}
\frac{\sin^{n-1}(\theta) }{ ( 1 - 2 \cos(\theta) \abs{x} + \abs{x}^2)^{\frac{n+\alpha}{2}}} .
\end{equation}
This is essentially the approach taken by G. Schechtman and M. Schmuckenschl\"{a}ger \cite{SchechtmanSchmucky} and F. Barthe, Y. Ma and Z. Zhang \cite{BartheMaZhang} in their study of the harmonic case $\alpha = 1$ (further details to be provided momentarily). 

While such an approach will certainly yield the most precise results, it is very specialized to the case of having a weighted manifold invariant under a huge symmetry group. Furthermore, the calculations involving the precise density (\ref{eq:1d-density}) may be at times tedious, or alternatively rely on the Brownian motion interpretation of the harmonic case $\alpha = 1$. In this note we would like to take a different path, which while yielding somewhat less precise estimates, is essentially effortless, and may be applied in far greater generality. Our main observation is the following:
  
\begin{thm} \label{thm:intro-main}
For all $n \geq 2$, $\abs{x} < 1$ and $\alpha \geq -n$, the weighted manifold $(S^{n},g,\mu_x^{n,\alpha})$ satisfies the Curvature-Dimension condition 
$CD(n-1 - \frac{n+\alpha}{4}, - \alpha)$. 
\end{thm}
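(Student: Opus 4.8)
The plan is to verify the Curvature--Dimension condition by a direct computation of the Bakry--\'Emery generalized Ricci tensor, taking advantage of the fact that the density of $\mu_x^{n,\alpha}$ is a power of the restriction to $S^n$ of an affine function of $y$. Recall that for a weighted manifold $(M^m,g,e^{-V}d\vol_g)$ and a generalized dimension $N$ (allowed to be negative, as here), the condition $CD(\rho,N)$ asserts the tensor inequality
\[
\Ric_{N,V} := \Ric_g + \text{Hess}_g V - \frac{1}{N-m}\, dV\otimes dV \;\ge\; \rho\, g .
\]
In our case $M=S^n$, so $m=n$ and $\Ric_g=(n-1)g$; the generalized dimension is $N=-\alpha$, so that $N-m=-(n+\alpha)\le 0$ by the hypothesis $\alpha\ge -n$; and, up to an irrelevant additive constant,
\[
V = \frac{n+\alpha}{2}\log f , \qquad f(y) := \abs{y-x}^2 = 1+\abs{x}^2 - 2\sscalar{x,y} ,
\]
which is strictly positive on $S^n$ since $\abs{x}<1$. (When $\alpha=-n$ one has $V\equiv\mathrm{const}$ and the assertion merely restates the $CD(n-1,n)$ property of the round sphere, so I assume $n+\alpha>0$ below.)

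The single geometric ingredient is the classical identity that the restriction to the unit sphere of a linear functional $y\mapsto\sscalar{a,y}$ is a first spherical harmonic, with Riemannian Hessian $-\sscalar{a,y}\,g$. From this I would record
\[
\nabla f = -2\brac{x-\sscalar{x,y}y}, \qquad \text{Hess}_g f = \brac{1+\abs{x}^2-f}\,g, \qquad \abs{\nabla f}^2 = 4\abs{x}^2-\brac{1+\abs{x}^2-f}^2 ,
\]
the last using $2\sscalar{x,y}=1+\abs{x}^2-f$. Differentiating the logarithm then gives
\[
\text{Hess}_g V = \frac{n+\alpha}{2}\brac{\frac{\text{Hess}_g f}{f} - \frac{df\otimes df}{f^2}}, \qquad dV\otimes dV = \frac{(n+\alpha)^2}{4}\,\frac{df\otimes df}{f^2} .
\]

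Substituting these into $\Ric_{N,V}$ with $N=-\alpha$, so that $-\tfrac{1}{N-m}=\tfrac{1}{n+\alpha}$, the two rank-one contributions combine into a single term, leaving
\[
\Ric_{N,V} = (n-1)g + \frac{n+\alpha}{2}\,\frac{\text{Hess}_g f}{f} - \frac{n+\alpha}{4}\,\frac{df\otimes df}{f^2} .
\]
Here the hypothesis enters a second time: since $n+\alpha\ge 0$ and $df\otimes df\le \abs{\nabla f}^2\, g$ as quadratic forms (the left side being rank one), I may replace $df\otimes df$ by $\abs{\nabla f}^2 g$ to obtain a scalar pointwise lower bound on $\Ric_{N,V}$. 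Plugging in the formulas for $\text{Hess}_g f$ and $\abs{\nabla f}^2$, expanding $\brac{1+\abs{x}^2-f}^2$ and collecting powers of $1/f$, I expect the $1/f$ terms to cancel identically and the bracketed coefficient to reduce to
\[
n-1-\frac{n+\alpha}{4} + \frac{(n+\alpha)\brac{1-\abs{x}^2}^2}{4f^2} \;\ge\; n-1-\frac{n+\alpha}{4} ,
\]
which is exactly the claimed bound. There is no serious obstacle; the content is (i) recognizing that $f=\abs{y-x}^2$ is the natural variable and invoking the spherical-harmonic Hessian identity, and (ii) keeping signs straight in the negative-dimensional Bakry--\'Emery tensor: the $N=-\alpha<0$ correction term is itself positive semidefinite but is dominated by the $df\otimes df$ part of $\text{Hess}_g V$, leaving a net negative coefficient that must be controlled through the rank-one estimate. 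The points to double-check carefully are that $\alpha\ge -n$ is precisely what makes the rank-one bound point the right way and the residual $1/f^2$ term nonnegative, and that the cancellation of the $1/f$ terms is genuine and not an arithmetic slip.
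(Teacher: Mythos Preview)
Your argument is correct; the algebra checks out exactly as you describe, including the cancellation of the $1/f$ terms and the residual nonnegative $\frac{(n+\alpha)(1-\abs{x}^2)^2}{4f^2}$ contribution.

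The paper proceeds a bit differently. Instead of working with $f=\abs{y-x}^2$, it observes that with $N=-\alpha$ one has $(\Psi_x^{n,\alpha})^{1/(N-n)}$ proportional to $\abs{y-x}$, so the $CD$ condition is equivalent to $\nabla_{S^n}^2 \abs{\cdot-x} / \abs{\cdot-x} \ge -\tfrac{1}{4} g$. It then computes $\nabla_{S^n}^2 \abs{\cdot-x}$ via the second fundamental form identity $\nabla_{S^n}^2 = \nabla_{\Real^{n+1}}^2 - (\partial_\nu)\, g$, and, fixing a unit tangent vector $\theta$, reduces the problem to showing that the two-variable function
\[
F(a,b)=\frac{a((1-a)^2+b^2)-b^2}{((1-a)^2+b^2)^2}, \qquad a=\sscalar{x,y},\; b=\sscalar{x,\theta},
\]
satisfies $F\ge -\tfrac14$ on the closed unit disc. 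This minimum is then verified by hand, with equality exactly on $a^2+b^2=1$.

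Your route is arguably more streamlined: by taking $f=\abs{y-x}^2$ and invoking the spherical-harmonic Hessian identity $\text{Hess}_g\sscalar{a,\cdot}=-\sscalar{a,\cdot}g$ intrinsically, you avoid the embedding computation, and the rank-one bound $df\otimes df\le \abs{\nabla f}^2 g$ replaces the explicit two-dimensional minimization. What the paper's approach buys in return is a transparent picture of where the extremum is attained (the entire boundary $\abs{x}=1$), and it makes immediate the remark that replacing $\abs{\cdot-x}$ by $\abs{\cdot-x}^p$ cannot help; your calculation makes the slack term $(1-\abs{x}^2)^2/f^2$ explicit instead.
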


We defer the definition of the Bakry--\'Emery Curvature-Dimension $CD(\rho,N)$ condition to Section \ref{sec:CD}, and presently only remark that $\rho \in \Real$ represents a lower bound on the generalized Ricci curvature of the given weighted manifold, whereas $N \in \Real$ represents an upper bound on its generalized dimension. Indeed, when $\alpha = -n$, corresponding (for all $x$) to the case of the canonical $n$-sphere equipped with its uniform Haar measure, our estimate becomes $CD(n-1,n)$, in precise agreement with the $n$-sphere's usual constant Ricci curvature of $(n-1) g$. 
Let us also remark that the Curvature-Dimension definition we employ only coincides with the original Bakry--\'Emery definition \cite{BakryEmery} in the range $N \in (-\infty,0) \cup [n,\infty]$, but not for $N \in [0,n)$ - see Section \ref{sec:CD} for more information. 

\medskip

Properties of weighted manifolds (and more generally, Markov diffusion processes) satisfying the $CD(\rho,N)$ condition have been intensively studied in the past three decades, and in the last decade the scope has been extended to include very general geodesic metric-measure spaces satisfying a synthetic version of the $CD(\rho,N)$ condition - see e.g. \cite{BakryEmery,BakryStFlour,QianWeightedVolumeThms,LedouxLectureNotesOnDiffusion,Ledoux-Book,CMSInventiones, CMSManifoldWithDensity, BakryQianGenRicComparisonThms, PerelmanEntropyFormulaForRicciFlow, VonRenesseSturmRicciChar, SturmCD12, LottVillaniGeneralizedRicci, WeiWylie-GenRicciTensor, MorganBook4Ed, BGL-Book,Wang-ManifoldsBook, EMilmanSharpIsopInqsForCDD, KolesnikovEMilman-Reilly} and the references therein for a very non-comprehensive exert. 
In the study of $n$-dimensional weighted-manifolds satisfying $CD(\rho,N)$, the range of admissible values for $N$ has traditionally been $N \in [n,\infty]$. However, returning to the family of measures (\ref{eq:intro-family}), note that this traditional range excludes the harmonic case $N = -\alpha = -1$. 
Fortunately, in recent years, this range has been extended to also include $N \in (-\infty,1)$ - see e.g. \cite{EMilmanNegativeDimension} for a recent account. 

While in the weighted manifold setting this extended range constitutes a recent development, in the Euclidean setting with $\rho = 0$ and $N \in (-\infty,0) \cup [n,\infty]$, the class of $CD(0,N)$ spaces $(\Real^n,\abs{\cdot},\mu)$ coincides with the class of convex measures (of full-dimensional convex support and $C^2$ density $\Psi$), introduced by Borell \cite{Borell-logconcave} in the 70's (cf. Brascamp--Lieb \cite{BrascampLiebPLandLambda1}) and studied by S. Bobkov and M. Ledoux \cite{BobkovConvexHeavyTailedMeasures,BobkovLedouxWeightedPoincareForHeavyTails}. In this latter setting, the case $N \in (-\infty,0)$ corresponds to ``heavy-tailed" measures, characterized by the requirement that $1/\Psi^{1/(n-N)}$ be convex. A prototypical example is the Cauchy probability measure on $(\Real^n,\abs{\cdot})$:
\[
\nu^{n,\alpha} = \frac{c_{n,\alpha}}{ (1+ \abs{y}^2 )^{\frac{n+\alpha}{2}}} dy \;\; , \;\; \alpha > 0 , 
\]
which satisfies $CD(0,-\alpha)$. Note that our measures (\ref{eq:intro-family}) may be thought of as spherical analogues of these Euclidean Cauchy measures, and so in hindsight it is not so surprising that they also satisfy a Curvature-Dimension condition with negative dimension (albeit with positive curvature contributed by the sphere).

\medskip

While Theorem \ref{thm:intro-main} boils down to an elementary calculation, our main incentive for writing this note is to demonstrate and advocate the usefulness of the $CD(\rho,N)$ condition in the less established range $N \in (-\infty,1)$. 
As immediate corollaries of Theorem \ref{thm:intro-main}, we apply the recently available isoperimetric, spectral and concentration results for that range. 
The case $N = -\alpha \in [1,n)$ is excluded from the ensuing discussion, since in general none of the above good properties hold in that range (as explained in \cite{EMilmanNegativeDimension}).

\medskip

We henceforth assume that $\alpha \in (-1, 3n-4)$, and set:
\[
\rho_{n,\alpha} := n-1 - \frac{n+\alpha}{4} > 0 . 
\]
First, we apply the isoperimetric comparison theorems obtained in our previous works \cite[Corollary 1.4, Theorem 6.1]{EMilmanNegativeDimension}, to deduce the strongest form of information regarding the measures (\ref{eq:intro-family}). Given a metric space $(\Omega,d)$ endowed with a Borel measure $\mu$, recall that the Minkowski (exterior) boundary measure $\mu^+(A)$ of a Borel set $A \subset \Omega$ is defined as $\mu^+(A) := \liminf_{\eps \to 0} \frac{\mu(A^d_{\eps} \setminus A)}{\eps}$, where $A_{\eps}=A^d_{\eps} := \set{x \in \Omega ; \exists y
\in A \;\; d(x,y) < \eps}$ denotes the $\eps$ extension of $A$ with
respect to the metric $d$. In our context, the metric $d$ is given by the standard geodesic distance on $(S^n,g)$.

\begin{cor} \label{cor:isop}
Given $n \geq 2$ and $\alpha \in (-1, 3n-4)$, set:
\[
\delta_{n,\alpha} := \frac{\rho_{n,\alpha}}{\alpha+1} > 0, 
\]
and denote the following functions on $\Real$:
\[
\varphi_{n,\alpha}(t) := \frac{c_{n,\alpha}}{\cosh(\sqrt{\delta_{n,\alpha}} t)^{\alpha+1}} \; \; , \;\; \Phi_{n,\alpha}(t) := \int_{-\infty}^t \varphi(s) ds ,
\]
where $c_{n,\alpha} > 0$ is a normalization constant to make $\varphi_{n,\alpha}$ a probability density. 
Then for any $\abs{x} < 1$ and Borel set $A \subset S^{n}$, the following isoperimetric inequality holds:
\[
(\mu_{x}^{n,\alpha})^+(A) \geq \varphi_{n,\alpha}\circ \Phi_{n,\alpha}^{-1} (\mu_{x}^{n,\alpha}(A)) .
\]
In particular, the following Cheeger-type isoperimetric inequality holds:
\[
(\mu_{x}^{n,\alpha})^+(A) \geq D_{Che}^{n,\alpha} \min(\mu_{x}^{n,\alpha}(A),1-\mu_{x}^{n,\alpha}(A)) 
\]
with:
\[
D_{Che}^{n,\alpha} \geq \sqrt{\delta_{n,\alpha}} \frac{1}{\int_0^\infty \cosh(t)^{1+\alpha} dt}  \geq c_0 \sqrt{\rho_{n,\alpha}} \min(1 , \sqrt{1 + \alpha}) 
 \]
 and $c_0 > 0$ a numeric constant. 
\end{cor}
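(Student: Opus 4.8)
The plan is to derive Corollary~\ref{cor:isop} from Theorem~\ref{thm:intro-main} by feeding the Curvature-Dimension condition $CD(\rho_{n,\alpha},-\alpha)$ into the isoperimetric comparison machinery of \cite{EMilmanNegativeDimension}. The first step is to record that, under the standing assumption $\alpha \in (-1,3n-4)$, one has $\rho_{n,\alpha} = n-1-\frac{n+\alpha}{4} > 0$ and the generalized dimension $N = -\alpha < 0$, so we are in the admissible regime $N \in (-\infty,0)$ (in particular $N < 1$) where the comparison theorems of \cite{EMilmanNegativeDimension} apply. The relevant model space for $CD(\rho,N)$ with $\rho > 0$ and $N < 0$ is a one-dimensional measure whose density, up to normalization, is $\cosh(\sqrt{\delta}\, t)^{N-1}$ on $\Real$ with $\delta = \frac{\rho}{N-1}$; here $N-1 = -\alpha-1 = -(\alpha+1)$, so $N-1 = -(\alpha+1)$ and $\delta_{n,\alpha} = \frac{\rho_{n,\alpha}}{\alpha+1} > 0$ exactly matches the definition in the statement, and the model density becomes $\cosh(\sqrt{\delta_{n,\alpha}}\,t)^{-(\alpha+1)}$, i.e.\ the function $\varphi_{n,\alpha}$. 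Thus \cite[Corollary 1.4]{EMilmanNegativeDimension} yields precisely $(\mu_x^{n,\alpha})^+(A) \geq \varphi_{n,\alpha} \circ \Phi_{n,\alpha}^{-1}(\mu_x^{n,\alpha}(A))$, which is the first displayed inequality.

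The second step is to extract the Cheeger-type bound from the first inequality. This is the standard reduction: the Cheeger constant $D_{Che}^{n,\alpha}$ is bounded below by $\inf_{t} \frac{\varphi_{n,\alpha}(t)}{\min(\Phi_{n,\alpha}(t),1-\Phi_{n,\alpha}(t))}$, and by symmetry of $\varphi_{n,\alpha}$ about $0$ this infimum is attained (or approached) in a way that reduces to controlling $\varphi_{n,\alpha}(t) / \Phi_{n,\alpha}(t)$ for $t \le 0$; equivalently, one invokes \cite[Theorem 6.1]{EMilmanNegativeDimension} which already packages this as $D_{Che}^{n,\alpha} \geq \sqrt{\delta_{n,\alpha}} \big/ \int_0^\infty \cosh(t)^{1+\alpha}\,dt$ after the change of variables $t \mapsto \sqrt{\delta_{n,\alpha}}\, t$ (the $\sqrt{\delta_{n,\alpha}}$ is the Jacobian factor, and the normalization constant $c_{n,\alpha}$ cancels the remaining integral). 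So the middle inequality of the last display is essentially a citation.

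The third step, and the only place requiring genuine (though elementary) estimation, is the final bound $\sqrt{\delta_{n,\alpha}} \big/ \int_0^\infty \cosh(t)^{1+\alpha}\,dt \geq c_0 \sqrt{\rho_{n,\alpha}} \min(1,\sqrt{1+\alpha})$. Writing $\sqrt{\delta_{n,\alpha}} = \sqrt{\rho_{n,\alpha}}/\sqrt{\alpha+1}$, it suffices to show $\sqrt{1+\alpha} \int_0^\infty \cosh(t)^{1+\alpha}\,dt \leq C_0 \max(1, 1/\sqrt{1+\alpha})$, i.e.\ $\int_0^\infty \cosh(t)^{-(1+\alpha)}\,dt \leq C_0 \max(1/\sqrt{1+\alpha}, 1/(1+\alpha))$ — wait, more carefully: we need $\int_0^\infty \cosh(t)^{1+\alpha}\,dt$ in the denominator, but $\cosh(t)^{1+\alpha}$ diverges for $\alpha > -1$, so the exponent in $\varphi_{n,\alpha}$ must be read as $\cosh(\cdot)^{-(\alpha+1)}$ and the displayed $\int_0^\infty \cosh(t)^{1+\alpha}dt$ is a typographical shorthand for $\int_0^\infty \cosh(t)^{-(1+\alpha)}dt$; granting that, one splits the integral at $t=1$: on $[0,1]$ it is bounded by a universal constant, and on $[1,\infty)$ one uses $\cosh(t) \geq e^t/2$ to get $\int_1^\infty \cosh(t)^{-(1+\alpha)}\,dt \leq 2^{1+\alpha}\int_1^\infty e^{-(1+\alpha)t}\,dt = \frac{2^{1+\alpha}e^{-(1+\alpha)}}{1+\alpha}$, which for $\alpha$ near $-1$ behaves like $\frac{1}{1+\alpha}$ and for large $\alpha$ is dominated by a constant; combining, $\int_0^\infty \cosh(t)^{-(1+\alpha)}\,dt \leq C_0 \max(1, 1/(1+\alpha))$. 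Plugging in: $\sqrt{\delta_{n,\alpha}}/\int_0^\infty \cosh(t)^{-(1+\alpha)}dt \geq \frac{\sqrt{\rho_{n,\alpha}}}{\sqrt{1+\alpha}} \cdot \frac{1}{C_0}\min(1, 1+\alpha) = \frac{1}{C_0}\sqrt{\rho_{n,\alpha}}\,\min(1/\sqrt{1+\alpha}, \sqrt{1+\alpha})$. The main (and essentially only) obstacle is thus the correct two-regime estimation of the $\cosh$ integral near $\alpha = -1$ versus $\alpha \to \infty$; the rest is bookkeeping and citation, and one should double-check the sign convention on the exponent against \cite{EMilmanNegativeDimension} to make sure the model density is $\cosh^{N-1}$ with the intended reading.
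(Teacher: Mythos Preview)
Your approach is the paper's approach: Theorem~\ref{thm:intro-main} gives $CD(\rho_{n,\alpha},-\alpha)$, and the corollary is read off from \cite[Corollary 1.4, Theorem 6.1]{EMilmanNegativeDimension}. Two corrections, though.

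First, a small one: for $\alpha\in(-1,0]$ one has $N=-\alpha\in[0,1)$, not $N<0$; the cited results still apply in this range (with the tensorial $CD$ definition used in the paper), so the conclusion stands, but your stated ``admissible regime $N\in(-\infty,0)$'' is too narrow.

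Second, and more substantively, your integral estimate is too crude in the large-$\alpha$ regime and does not recover the stated bound. Bounding $\int_0^1 \cosh(t)^{-(1+\alpha)}\,dt$ by a universal constant yields only
\[
\int_0^\infty \cosh(t)^{-(1+\alpha)}\,dt \;\leq\; C_0 \max\!\brac{1,\tfrac{1}{1+\alpha}},
\]
which after dividing into $\sqrt{\delta_{n,\alpha}} = \sqrt{\rho_{n,\alpha}}/\sqrt{1+\alpha}$ gives, for $\alpha\geq 0$, the lower bound $c\,\sqrt{\rho_{n,\alpha}}/\sqrt{1+\alpha}$ rather than $c_0\sqrt{\rho_{n,\alpha}}$. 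Since $\alpha$ ranges up to $3n-4$, this loses a factor $\sqrt{n}$. The fix is to capture the Gaussian behaviour near $t=0$: for instance, using $\cosh(t)\geq e^{t^2/4}$ on $[0,2]$ one gets
\[
\int_0^2 \cosh(t)^{-(1+\alpha)}\,dt \;\leq\; \int_0^\infty e^{-(1+\alpha)t^2/4}\,dt \;=\; \sqrt{\frac{\pi}{1+\alpha}},
\]
while the tail $\int_2^\infty$ is controlled as you did by $C/(1+\alpha)$. This yields $\int_0^\infty \cosh(t)^{-(1+\alpha)}\,dt \leq C\max\!\brac{\tfrac{1}{\sqrt{1+\alpha}},\tfrac{1}{1+\alpha}}$, and then
\[
\frac{\sqrt{\delta_{n,\alpha}}}{\int_0^\infty \cosh(t)^{-(1+\alpha)}\,dt} \;\geq\; \frac{\sqrt{\rho_{n,\alpha}}}{C\sqrt{1+\alpha}}\,\min(\sqrt{1+\alpha},1+\alpha) \;=\; \frac{1}{C}\sqrt{\rho_{n,\alpha}}\,\min(1,\sqrt{1+\alpha}),
\]
as claimed. (Alternatively, the substitution $u=\sinh t$ turns the integral into a Beta integral and gives the same asymptotics.) Your catch that the displayed $\cosh(t)^{1+\alpha}$ must be read as $\cosh(t)^{-(1+\alpha)}$ is correct.
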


In fact, a slightly more refined version of the above result may be obtained by incorporating information on the diameter of the sphere - see \cite{EMilmanNegativeDimension}. 
As an immediate corollary, we obtain the following two-level concentration estimate \cite[Proposition 6.4]{EMilmanNegativeDimension}:
\begin{cor}
With the above assumptions and notation, for any Borel set $A \subset S^n$ with $\mu_{x}^{n,\alpha}(A) \geq 1/2$, we have for any $r \in (0,\pi)$:
\[ \mu_{x}^{n,\alpha}(S^n \setminus A_r) \leq \int_r^\infty \varphi_{n,\alpha}(t) dt \leq \begin{cases}   
C \min(1,\sqrt{\alpha+1}) \frac{\exp(- c \rho_{n,\alpha} r^2)}{1 + \sqrt{\rho_{n,\alpha}} r }  & r \in \left [ 0,\sqrt{\frac{\alpha+1}{\rho_{n,\alpha}}} \right ] \\ C \min(1, \frac{1}{\sqrt{\alpha+1}}) \exp(- c \sqrt{\alpha+1} \sqrt{\rho_{n,\alpha}} r) & \text{otherwise} 
\end{cases} ,
\] where $c , C > 0$ are numeric constants.
\end{cor}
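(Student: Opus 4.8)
The plan is to reduce the two-level concentration statement to the isoperimetric inequality already established in Corollary \ref{cor:isop}, and then to estimate the one-dimensional model tail $\int_r^\infty \varphi_{n,\alpha}(t)\,dt$ explicitly. The first step is standard: given a Borel set $A$ with $\mu_x^{n,\alpha}(A) \ge 1/2$, the isoperimetric inequality $(\mu_x^{n,\alpha})^+(B) \ge \varphi_{n,\alpha}\circ \Phi_{n,\alpha}^{-1}(\mu_x^{n,\alpha}(B))$ integrates (via the standard Gromov--Milman / ``needle'' argument for isoperimetric profiles comparing to a one-dimensional model) to the conclusion that $\mu_x^{n,\alpha}(S^n \setminus A_r) \le 1 - \Phi_{n,\alpha}(\Phi_{n,\alpha}^{-1}(1/2) + r) = \int_{\Phi_{n,\alpha}^{-1}(1/2)+r}^\infty \varphi_{n,\alpha}(s)\,ds \le \int_r^\infty \varphi_{n,\alpha}(s)\,ds$, using that $\Phi_{n,\alpha}^{-1}(1/2) = 0$ by symmetry of $\varphi_{n,\alpha}$ about $0$. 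This last inequality is exactly \cite[Proposition 6.4]{EMilmanNegativeDimension}, which I would simply invoke, so the only real work is the explicit tail bound on the right-hand side.

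The core of the argument is therefore to show that, with $\varphi_{n,\alpha}(t) = c_{n,\alpha}\cosh(\sqrt{\delta_{n,\alpha}}\,t)^{-(\alpha+1)}$ and $\delta_{n,\alpha} = \rho_{n,\alpha}/(\alpha+1)$, one has the claimed two-regime estimate. Substituting $u = \sqrt{\delta_{n,\alpha}}\,t$, the tail becomes $\frac{c_{n,\alpha}}{\sqrt{\delta_{n,\alpha}}}\int_{\sqrt{\delta_{n,\alpha}}\,r}^\infty \cosh(u)^{-(\alpha+1)}\,du$, and the normalization gives $c_{n,\alpha}/\sqrt{\delta_{n,\alpha}} = \big(2\int_0^\infty \cosh(u)^{-(\alpha+1)}\,du\big)^{-1}$; note $\int_0^\infty \cosh(u)^{-(\alpha+1)}\,du$ is finite precisely because $\alpha + 1 > 0$, and its reciprocal is comparable to $\min(1,\sqrt{\alpha+1})$ (bounded below by a constant when $\alpha+1 \ge 1$, and comparable to $\sqrt{\alpha+1}$ as $\alpha+1 \to 0$ since the integral behaves like $\tfrac{1}{2}\log(1/(\alpha+1))$... — more carefully, one checks $\int_0^\infty\cosh(u)^{-p}du \asymp \min(1/p, \text{const})$ for the relevant scaling, matching the stated $\min(1,1/\sqrt{\alpha+1})$ prefactor in the far regime and $\min(1,\sqrt{\alpha+1})$ after combining with the Gaussian normalization). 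Then I split the remaining integral $\int_s^\infty \cosh(u)^{-(\alpha+1)}\,du$ at the threshold $s \asymp \sqrt{\alpha+1}$: for $u \le \sqrt{\alpha+1}$ one uses $\cosh(u) \ge e^{u^2/C}$ on a bounded range to extract the Gaussian factor $\exp(-c\rho_{n,\alpha}r^2)$ together with the $1/(1+\sqrt{\rho_{n,\alpha}}\,r)$ gain from integrating a Gaussian-type tail; for $u \ge \sqrt{\alpha+1}$ one uses $\cosh(u) \ge e^{u}/2$, so $\cosh(u)^{-(\alpha+1)} \lesssim e^{-(\alpha+1)u}$ and the tail integral is $\lesssim \frac{1}{\alpha+1}e^{-(\alpha+1)s}$, which in the original variable $s = \sqrt{\delta_{n,\alpha}}\,r$ gives $\exp(-(\alpha+1)\sqrt{\delta_{n,\alpha}}\,r) = \exp(-\sqrt{(\alpha+1)\rho_{n,\alpha}}\,r)$, as claimed.

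The main obstacle — really a matter of careful bookkeeping rather than a conceptual difficulty — is tracking the dependence of all constants on $\alpha+1$ uniformly as $\alpha + 1 \to 0^+$ and as $\alpha + 1 \to \infty$, so that the prefactors $\min(1,\sqrt{\alpha+1})$ and $\min(1,1/\sqrt{\alpha+1})$ come out with the right powers and the crossover point $r \asymp \sqrt{(\alpha+1)/\rho_{n,\alpha}}$ is exactly where the Gaussian and exponential regimes meet. One must verify that at $r = \sqrt{(\alpha+1)/\rho_{n,\alpha}}$ the two bounds agree up to constants (both are $\asymp \min(1,1/\sqrt{\alpha+1})\exp(-c(\alpha+1))$ after simplification), ensuring continuity of the estimate across the two cases. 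I would organize this by first proving the clean one-variable lemma: \emph{for $p > 0$ and $s \ge 0$, $\int_s^\infty \cosh(u)^{-p}\,du \le C\,[\,\min(1,1/\sqrt p)\,\tfrac{e^{-cps^2/(1+ps^2)\cdot\min(1,ps)}}{\cdots}\,]$}, stated precisely in the two regimes $ps \le 1$ vs. $ps \ge 1$ — actually most transparently: $\int_s^\infty \cosh(u)^{-p}du \lesssim \tfrac{1}{1+\sqrt p\, s}\min(1,\tfrac1{\sqrt p})e^{-cps^2}$ when $\sqrt p\,s \le \sqrt p$ i.e. $s\le 1$... — and then substitute $p = \alpha+1$, $s = \sqrt{\delta_{n,\alpha}}\,r$ and use $p\,s^2 = (\alpha+1)\delta_{n,\alpha}r^2 = \rho_{n,\alpha}r^2$ and $\sqrt p\, s = \sqrt{(\alpha+1)\delta_{n,\alpha}}\,r = \sqrt{\rho_{n,\alpha}}\,r$. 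Everything else is routine, and the statement then follows by assembling these pieces together with the isoperimetric input from Corollary \ref{cor:isop}.
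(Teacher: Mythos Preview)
Your approach is exactly the paper's: the corollary is stated there with no proof beyond citing \cite[Proposition 6.4]{EMilmanNegativeDimension} as an immediate consequence of the isoperimetric inequality in Corollary~\ref{cor:isop}, which is precisely the reduction you perform. The additional explicit tail computation you sketch is not carried out in the paper; your outline is sound, though note that $\int_0^\infty \cosh(u)^{-p}\,du \asymp 1/p$ (not $\tfrac{1}{2}\log(1/p)$) as $p=\alpha+1 \to 0^+$, which is the asymptotic actually needed for the bookkeeping you flag.
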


Note that in the harmonic case $\alpha = 1$, our concentration estimates only yield exponential tail decay, and are thus inferior to the uniform sub-Gaussian estimate:
\begin{equation} \label{eq:sub-Gaussian}
\mu_{x}^{n,\alpha}(S^n \setminus A_r) \leq C \exp(-c n r^2) ,
\end{equation}
obtained by Schechtman--Schmuckenschl\"{a}ger \cite{SchechtmanSchmucky}. However, as soon as $a n \leq \alpha \leq b n$ for some $0< a < b < 3$, observe that our concentration estimates do in fact become sub-Gaussian of the form (\ref{eq:sub-Gaussian}), with $c = c(a,b)$ and $C = C(a,b)$. 

\medskip
Finally, we apply the Lichnerowicz spectral-gap estimate from our previous work with A. Kolesnikov \cite{KolesnikovEMilman-Reilly} (proved for weighted manifolds with convex boundaries, and also independently obtained by S.-I. Ohta \cite{Ohta-NegativeN} in the case of closed manifolds); the improvement in the range $\alpha \in (-1,1)$ below is obtained by invoking the Maz'ya--Cheeger inequality as in \cite[Theorem 6.1]{EMilmanNegativeDimension}. Let $\lambda_x^{n,\alpha}>0$ denote the spectral-gap of $(S^n,g,\mu_x^{n,\alpha})$, i.e. the maximal constant $\lambda > 0$ so that:
\begin{equation} \label{eq:Poincare}
\int_{S^n} g(\nabla f, \nabla f) d\mu_x^{n,\alpha} \geq \lambda \int_{S^n} f^2  d\mu_x^{n,\alpha},
\end{equation}
for all smooth functions $f : (S^n,g) \rightarrow \Real$ with $\int_{S^n} f d\mu_x^{n,\alpha} = 0$. 
\begin{cor}
For all $\abs{x} < 1$ and $\alpha \in (0, 3n-4)$, we have:
\[
\lambda_x^{n,\alpha} \geq \frac{\alpha}{\alpha+1} \rho_{n,\alpha} .
\]
When $\alpha \in (-1,1)$, we also have:
\[
\lambda_x^{n,\alpha} \geq \frac{c^2_0}{4}  \min(1 , 1 + \alpha) \rho_{n,\alpha} . 
\]
\end{cor}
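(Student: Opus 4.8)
The plan is to read off the spectral gap directly from the Curvature--Dimension information supplied by Theorem~\ref{thm:intro-main}, invoking in each range the sharpest available Lichnerowicz- or Cheeger-type estimate for the $CD(\rho,N)$ condition in the regime $N<n$. Throughout, $(S^n,g,\mu_x^{n,\alpha})$ is, by Theorem~\ref{thm:intro-main}, a closed weighted manifold satisfying $CD(\rho_{n,\alpha},-\alpha)$, and the standing assumption $\alpha<3n-4$ guarantees $\rho_{n,\alpha}>0$; since the density is smooth and strictly positive on all of $S^n$, no boundary effects intervene.

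For $\alpha\in(0,3n-4)$ we have $N:=-\alpha<0$, and I would apply the generalized Lichnerowicz spectral-gap bound for closed weighted manifolds (equivalently, Markov diffusion triples) satisfying $CD(\rho,N)$ with $\rho>0$ and $N\in(-\infty,0)$, due to Kolesnikov and the author \cite{KolesnikovEMilman-Reilly} and, in the closed case, to Ohta \cite{Ohta-NegativeN}. Its mechanism is the familiar one: testing the generalized Bochner inequality associated with $CD(\rho,N)$ on a first eigenfunction and integrating yields $\tfrac{N-1}{N}\,\lambda\geq\rho$, and for $N<0$ the factor $\tfrac{N-1}{N}=1-\tfrac1N$ is positive, so this rearranges to $\lambda\geq\tfrac{N}{N-1}\rho$. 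Substituting $N=-\alpha$ gives $\tfrac{N}{N-1}=\tfrac{\alpha}{\alpha+1}$, hence $\lambda_x^{n,\alpha}\geq\tfrac{\alpha}{\alpha+1}\rho_{n,\alpha}$, as asserted.

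For $\alpha\in(-1,1)$ the preceding estimate is of no use: when $N=-\alpha\in(0,1)$ the factor $\tfrac{N-1}{N}$ is negative, so the Bochner computation produces only an (uninformative) upper bound on $\lambda$, while as $\alpha\downarrow-1$, i.e. $N\uparrow1$, the would-be coefficient $\tfrac{N}{N-1}$ blows up. The remedy is to route through isoperimetry rather than Bochner. Corollary~\ref{cor:isop} already furnishes the Cheeger-type inequality $(\mu_x^{n,\alpha})^+(A)\geq D_{Che}^{n,\alpha}\min(\mu_x^{n,\alpha}(A),1-\mu_x^{n,\alpha}(A))$ with $D_{Che}^{n,\alpha}\geq c_0\sqrt{\rho_{n,\alpha}}\,\min(1,\sqrt{1+\alpha})$. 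Feeding this into the Maz'ya--Cheeger inequality $\lambda_x^{n,\alpha}\geq\tfrac14(D_{Che}^{n,\alpha})^2$ --- which requires no curvature hypothesis whatsoever, exactly as exploited in \cite[Theorem~6.1]{EMilmanNegativeDimension} --- and using $\min(1,\sqrt{1+\alpha})^2=\min(1,1+\alpha)$, yields $\lambda_x^{n,\alpha}\geq\tfrac{c_0^2}{4}\min(1,1+\alpha)\rho_{n,\alpha}$.

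Since both bounds are obtained by quoting results already available once Theorem~\ref{thm:intro-main} is in hand, there is no genuine analytic obstacle; the one point worth stressing --- and the reason the statement splits into two regimes --- is that the Lichnerowicz coefficient $\tfrac{N}{N-1}$ behaves well for $N<0$ but degenerates throughout $N\in(0,1)$, so in the low-dimension range one must trade the clean Bochner estimate for the slightly lossier Cheeger/Maz'ya route built on Corollary~\ref{cor:isop}.
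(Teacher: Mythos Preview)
Your proposal is correct and follows essentially the same route as the paper: the first bound comes from the generalized Lichnerowicz estimate of \cite{KolesnikovEMilman-Reilly,Ohta-NegativeN} applied to the $CD(\rho_{n,\alpha},-\alpha)$ condition (with $N=-\alpha<0$), and the second from feeding the Cheeger constant of Corollary~\ref{cor:isop} into the Maz'ya--Cheeger inequality, exactly as the paper indicates via \cite[Theorem~6.1]{EMilmanNegativeDimension}. One minor expository point: your statement that for $\alpha\in(-1,1)$ the Lichnerowicz estimate is ``of no use'' slightly overstates things --- on the sub-interval $\alpha\in(0,1)$ it is still valid, merely degenerate as $\alpha\to0^+$ --- but this does not affect the argument.
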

In the harmonic case $\alpha= 1$, this should be compared with the estimate:
\begin{equation} \label{eq:gap1}
 n \geq \lambda_x^{n,1} \geq \frac{n-1}{2} 
\end{equation}
obtained by Barthe--Ma--Zhang in \cite{BartheMaZhang}. While our estimate yields the inferior bound:
\begin{equation} \label{eq:gap2}
\lambda_x^{n,1} \geq \frac{3}{8} (n-1) - \frac{1}{4}
\end{equation}
it is nevertheless of the correct order. Barthe--Ma--Zhang also showed that no log-Sobolev inequality which holds uniformly for all $\abs{x}< 1$ is possible on $(S^n,g,\mu_x^{n,\alpha})$ when $\alpha = 1$,  nevertheless obtaining an essentially sharp estimate depending on $\abs{x}$. While we do not pursue a similar direction here, we comment that this indeed agrees with the model space $(\Real,\abs{\cdot},\varphi_{n,\alpha}(t) dt)$ from Corollary \ref{cor:isop}, which does not satisfy any log-Sobolev inequality when $N = -\alpha \in (-\infty,1)$. Note that in general, the spectral-gap estimates (\ref{eq:gap1}) or (\ref{eq:gap2}) do not yield the sub-Gaussian concentration (\ref{eq:sub-Gaussian}), but only exponential concentration \cite{GromovMilmanLevyFamilies}. It is the $CD(\rho,N)$ condition for $\rho > 0$ and $N \in (-\infty,1)$ which precisely reconciles between spectral-gap, lack of log-Sobolev inequality, and sub-Gaussian concentration in the range $r \in (0,\sqrt{(1-N)/\rho})$ (see \cite{EMilmanNegativeDimension}).

\medskip

The rest of this work is organized as follows. In Section \ref{sec:CD} we recall the definition of the Curvature-Dimension condition. In Section \ref{sec:sphere} we provide a proof of Theorem \ref{thm:intro-main}. In Section \ref{sec:conclude} we provide some concluding remarks; in particular, we comment on how to extend the class (\ref{eq:intro-family}) to measures involving norms more general than Euclidean (and without assuming any symmetry). 

\medskip \noindent
\textbf{Acknowledgements.} It is a pleasure to thank Dominique Bakry and Michel Ledoux for their various (already classical) text-books on Concentration of Measure and related areas - they serve as a never-ending source of ideas, knowledge and inspiration.

\section{Curvature-Dimension Condition} \label{sec:CD}
 
 Let $(M^n,g)$ denote an $n$-dimensional ($n \geq 2$) complete connected oriented smooth Riemannian manifold, and let $\mu$ denote a measure on $M$ having density $\Psi$ with respect to the Riemannian volume form $vol_g$. For simplicity, we assume that $M$ is closed, i.e. compact and without boundary, but all the results we will use equally apply when $M$ is only assumed geodesically convex. We assume that $\Psi$ is positive and $C^2$ smooth on the entire manifold. As usual, we denote by $Ric_g$ the usual Ricci curvature tensor and by $\nabla_g$ the Levi-Civita covariant derivative.

\begin{defn*}[Generalized Ricci Tensor]
Given $N \in (-\infty,\infty]$, the $N$-dimensional generalized Ricci curvature tensor $Rig_{g,\mu,N}$ is defined as:
\begin{equation} \label{eq:Ric-Tensor}
Ric_{g,\mu,N}  :=  Ric_g - \LogHess_{N-n} \Psi ,
\end{equation}
where:
\[
\LogHess_{N-n} \Psi := \nabla^2_g \log(\Psi) + \frac{1}{N-n} \nabla_g \log(\Psi) \otimes \nabla_g \log(\Psi) =  (N-n) \frac{\nabla^2_g \Psi^{\frac{1}{N-n}}}{\Psi^{\frac{1}{N-n}}} .
\]
To make sense of the latter tensor when $N-n \in \set{0,\infty}$, we employ throughout the convention that $\frac{1}{\infty} = 0$, $\frac{1}{0} = +\infty$ and $\infty \cdot 0 = 0$. 
\end{defn*}

\begin{defn}[Curvature-Dimension condition] \label{def:CD}
$(M^n,g,\mu)$ satisfies the Curvature-Dimension condition $\CD(\rho,N)$ ($\rho \in \Real$ and $N \in (-\infty,\infty]$) if $\Ric_{g,\mu,N} \geq \rho g$ as symmetric $2$-tensors on $M$. 
\end{defn}

Note that $\CD(\rho,N)$ is satisfied with $N=n$ if and only if $\Psi$ is constant and $Ric_{g,\mu,n} = Ric_g \geq \rho g$ (the classical constant density case). The generalized Ricci tensor (\ref{eq:Ric-Tensor}) was introduced with $N=\infty$ by Lichnerowicz \cite{Lichnerowicz1970GenRicciTensorCRAS,Lichnerowicz1970GenRicciTensor} and in general by Bakry \cite{BakryStFlour} (cf. Lott \cite{LottRicciTensorProperties}). The Curvature-Dimension condition was introduced by Bakry and \'Emery for $N \in [n,\infty]$ in equivalent form using $\Gamma$-calculus in \cite{BakryEmery} (in the more abstract framework of diffusion generators). Its name stems from the fact that the generalized Ricci tensor incorporates information on curvature and dimension from both the geometry of $(M,g)$ and the measure $\mu$, and so $\rho$ may be thought of as a generalized-curvature lower bound, and $N$ as a generalized-dimension upper bound.

Let us give a bit more background on the original definition given by Bakry and \'Emery in \cite{BakryEmery}. 
Given a generator $L$ of a Markov diffusion process (see \cite{BGL-Book} for more details), Bakry--\'Emery defined the $CD(\rho,N)$ condition as the requirement that:
\[
\Gamma_2(f) \geq \rho \Gamma(f) + \frac{1}{N} (Lf)^2 ,
\]
for all appropriate test functions $f$; in our weighted-manifold setup, $\Gamma(f) = g(\nabla f,\nabla f)$, $\Gamma_2(f) = (\Ric_g - \nabla_g^2 \log \Psi)(\nabla f,\nabla f)$ and $L = \Delta_g + \nabla_g \log \Psi$ denotes the generalized Laplacian. 
 The equivalence of the above two definitions was established by Bakry \cite{BakryStFlour} for the traditional range $N \in [n,\infty]$ and extended to the range $N \in (-\infty,0)$ in \cite{KolesnikovEMilman-Reilly}. However, as observed in \cite[Section 7]{EMilmanNegativeDimension}, the two definitions are no longer equivalent in the range $N \in [0,n)$, which includes the case $N \in [0,1)$ relevant to the results in this work. Consequently, we emphasize that we will be using the tensorial Definition \ref{def:CD}. In particular, note that our version of the Curvature-Dimension condition $CD(\rho,N)$ is clearly monotone in $\rho$ and in $\frac{1}{N-n}$.

\section{Calculation on the Sphere} \label{sec:sphere}

In this work, we specialize to the canonical $n$-sphere $(S^n,g)$ endowed with the probability measure $\mu^{n,\alpha}_x$ given in (\ref{eq:intro-family}). Let us denote its density with respect to the Haar probability measure $d\sigma_n(y)$ by $\Psi^{n,\alpha}_x(y)$, i.e.:
\[
\Psi^{n,\alpha}_x(y) := \frac{c_x^{n,\alpha}}{\abs{y - x}^{\alpha}} .
\]
Recall that the classical Ricci tensor of the canonical $n$-sphere satisfies $Ric_g = (n-1) g$. Consequently, Theorem \ref{thm:intro-main} will follow once we establish that for any $n \geq 2$, $\alpha \geq -n$, $\abs{x} < 1$, setting:
\[
N = - \alpha ,
\]
we have:
\begin{equation} \label{eq:LogHess-Spec}
-\LogHess_{N-n} \Psi^{n,\alpha}_x = -(N-n)  \frac{\nabla^2_g (\Psi_x^{n,\alpha})^{\frac{1}{N-n}}}{(\Psi_x^{n,\alpha})^{\frac{1}{N-n}}}\geq - \frac{n+\alpha}{4}  g .
\end{equation}
Since $-(N-n) = n+\alpha \geq 0$ in the above range, Theorem \ref{thm:intro-main} boils down to showing the following:

\begin{prop}
For every $\abs{x} < 1$: \[
\frac{\nabla_{S^n}^2 \abs{\cdot-x}}{\abs{\cdot-x}} \geq - \frac{1}{4} g , \]
where $\nabla_{S^{n}} = \nabla_g$ denotes the covariant derivative on the canonical $n$-sphere $(S^{n},g)$.
\end{prop}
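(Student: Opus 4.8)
\section*{Proof proposal}

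The plan is to use the rotational symmetry of the configuration to reduce the tensor inequality to a one-variable computation. If $x = 0$ then $\abs{\cdot - x} \equiv 1$ on $S^n$ and its Hessian vanishes, so assume $x \neq 0$ and write $r := \abs{x} \in (0,1)$, $e := x/\abs{x}$. The function $f(y) := \abs{y - x}$ on $S^n$ depends only on the geodesic distance $\theta \in [0,\pi]$ from $y$ to $e$, since
\[
f(\theta)^2 \;=\; \abs{y}^2 - 2\scalar{y,x} + \abs{x}^2 \;=\; 1 + r^2 - 2 r \cos\theta \;=:\; u(\theta) ,
\]
and note $u(\theta) \geq (1-r)^2 > 0$. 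In geodesic polar coordinates centered at $e$ the round metric is $g = d\theta^2 + \sin^2\theta\, g_{S^{n-1}}$, so for a function of $\theta$ alone the Hessian is diagonalized by the splitting $TS^n = \Real\,\partial_\theta \oplus TS^{n-1}$, with eigenvalue $f''(\theta)$ along $\partial_\theta$ and eigenvalue $f'(\theta)\cot\theta$ (defined at $\theta \in \set{0,\pi}$ by continuity) on the orthogonal complement. Hence the Proposition is equivalent to the two scalar inequalities $f''(\theta)/f(\theta) \geq -1/4$ and $f'(\theta)\cot\theta/f(\theta) \geq -1/4$ for all $\theta \in [0,\pi]$ and $r \in [0,1)$.

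Next I would simply differentiate. Since $f = u^{1/2}$, from $u' = 2r\sin\theta$ and $u'' = 2r\cos\theta$ one gets $f'/f = u'/(2u) = r\sin\theta/u$ and
\[
\frac{f''}{f} \;=\; \frac{u''}{2u} - \frac{(u')^2}{4u^2} \;=\; \frac{r\cos\theta}{u} - \frac{r^2\sin^2\theta}{u^2} , \qquad \frac{f'\cot\theta}{f} \;=\; \frac{r\cos\theta}{u} .
\]
Now both inequalities, after multiplying through by the positive quantities $u$ and $u^2$ respectively, become manifest: for the second, $u + 4r\cos\theta = 1 + r^2 + 2r\cos\theta = \abs{y+x}^2 \geq 0$; for the first, expanding $u = 1 + r^2 - 2r\cos\theta$ gives
\[
u^2 + 4r\cos\theta\,u - 4r^2\sin^2\theta \;=\; (1+r^2)^2 - 4r^2 \;=\; (1 - r^2)^2 \;\geq\; 0 .
\]
This establishes $\nabla_{S^n}^2 f / f \geq -\tfrac14 g$, and shows moreover that the constant $-\tfrac14$ is optimal: indeed, letting $r \to 1$ one finds $f''/f \to -\tfrac14$ identically in $\theta$.

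There is really no obstacle here beyond bookkeeping; the only points demanding a little care are writing down the correct radial-Hessian formula on the sphere (equivalently, the principal curvatures $\cot\theta$ of the geodesic spheres centered at $e$) and checking that $f'(\theta)\cot\theta$ extends smoothly across the poles. The mild ``miracle'' is simply that the value $1/4$ is exactly the one for which the two displayed quantities reduce to perfect squares. As an alternative that avoids choosing coordinates, one can work with the Euclidean extension $F(z) = \abs{z-x}$ on $\Real^{n+1}$ together with the hypersurface Hessian identity $\nabla^2_{S^n}(F|_{S^n})(X,X) = \nabla^2_{\Real^{n+1}}F(X,X) - \scalar{\nu,\nabla F}\abs{X}^2$ for $X \in T_yS^n$, where $\nu(y) = y$; substituting $\nabla F = (z-x)/\abs{z-x}$ and the Euclidean Hessian $\nabla^2 F = \abs{z-x}^{-1}(I - \widehat v \otimes \widehat v)$ with $\widehat v := (z-x)/\abs{z-x}$, and bounding the tangential component of $x$ by Cauchy--Schwarz, the same short computation again lands on $(1 - \abs{x}^2)^2 \geq 0$.
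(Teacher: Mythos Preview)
Your proof is correct. The computations are clean and the two scalar inequalities indeed collapse to the perfect squares $(1-r^2)^2 \geq 0$ and $|y+x|^2 \geq 0$; your observation that $f''/f \to -\tfrac14$ identically as $r \to 1$ is a nice addendum confirming optimality of the constant.

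The route, however, is genuinely different from the paper's. You exploit the rotational symmetry about the axis through $x$: working in geodesic polar coordinates about $e = x/|x|$ diagonalizes the Hessian into a radial eigenvalue $f''(\theta)$ and a transverse eigenvalue $f'(\theta)\cot\theta$, reducing everything to two one-variable inequalities in $\theta$ with parameter $r$. The paper instead works extrinsically, using the hypersurface identity $\nabla_{S^n}^2 f = \nabla^2_{\Real^{n+1}} f - f_\nu\, g$ together with the explicit Euclidean Hessian of $|\cdot - x|$; this yields a two-variable minimization in $(a,b) = (\scalar{x,y},\scalar{x,\theta})$ over the unit disk, which is dispatched by a single completion-of-the-square. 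Your approach is arguably more transparent once one has the radial-Hessian formula at hand, and it isolates the two eigenvalues rather than treating all tangent directions at once. The paper's approach, on the other hand, does not rely on identifying a symmetry axis, which is what allows it (in its concluding remarks) to generalize the argument to densities built from non-Euclidean norms $\norm{\cdot}$, where no such symmetry is available. You in fact sketch exactly the paper's method in your final paragraph as an alternative, so you have both arguments in hand.
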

\begin{proof}
First, recall (e.g. \cite{GHLBookEdition3}) that for any $C^2$ function $f$ defined on a neighborhood of $S^{n}$ in $\Real^{n+1}$:
\[
\nabla_{S^n}^2 f = \nabla^2_{\Real^{n+1}} f - f_\nu \II^\nu_{S^n} , 
\]
where $\nabla = \nabla_{\Real^{n+1}}$ denotes the standard covariant derivative in Euclidean space $(\Real^{n+1},\scalar{\cdot,\cdot})$, $\nu$ is the outward unit normal to $S^n$ in its standard embedding in $\Real^{n+1}$, $f_\nu$ is the derivative of $f$ in the direction of $\nu$, and $\II = \II_{S^n}$ denotes the second-fundamental form of the latter embedding with respect to $\nu$, i.e. $\II(X,Y) = \scalar{\nabla_X \nu, Y}$ (note our slightly non-standard convention for the direction of the normal). Trivially, in our case we have $\II^\nu_{S^n} = g$. 

Now fix $y \in S^{n}$ and a unit vector $\theta$ in the tangent space $T_{y} S^{n}$. We identify all tangent spaces with the corresponding subspaces of $\Real^{n+1}$, so that $\scalar{y,\theta} = 0$. Our task is then to show that:
\begin{equation} \label{eq:task1}
 \frac{\scalar{\nabla^2 f (y) \theta, \theta} - \scalar{\nabla f (y) , y}}{f(y)} \geq -\frac{1}{4} ,
\end{equation}
for $f(y) = \abs{y-x}$ and all $y$ and $\theta$ as above, where all differentiation is now with respect to the standard connection on $\Real^{n+1}$. Calculating:
\[
 \nabla f(y) = \frac{y-x}{\abs{y-x}} \;\; , \;\; \nabla^2 f (y) = \frac{1}{\abs{y-x}} \brac{ I_{\Real^{n+1}} - \frac{y-x}{\abs{y-x}} \otimes \frac{y-x}{\abs{y-x}} } ,
\]
and so (\ref{eq:task1}) boils down to showing:
\[
 \frac{1}{\abs{y-x}}  \brac{1 - \frac{\scalar{y-x,\theta}^2}{\abs{y-x}^2} - \scalar{y-x,y}} = \frac{1}{\abs{y-x}^2} \brac{\scalar{x,y} - \frac{\scalar{x,\theta}^2}{\abs{y-x}^2}} \geq -\frac{1}{4} .
\]

To this end, denote:
\[
a = \scalar{x,y} \;\;, \;\; b = \scalar{x,\theta},
\]
and note that:
\[
 \abs{x}^2 = a^2 + b^2 \;\; , \;\; \abs{y-x}^2 = (1-a)^2 + b^2 .
\]
Consequently, we see that our problem reduces to calculating the minimum of the function:
\begin{equation} \label{eq:F}
F(a,b) := \frac{a (( 1-a)^2 + b^2) - b^2}{(( 1-a)^2 + b^2)^2} ,
\end{equation}
on the circle $\set{a^2 + b^2 \leq 1}$. Denoting $d := ( 1-a)^2 + b^2$, we have:
\[
F(a,b) = \frac{ -(1-a) d + (1-a)^2 }{d^2} \geq \frac{ - \frac{1}{4} d^2}{d^2} = -\frac{1}{4} ,
\]
and indeed this minimal value is attained on the circle's entire boundary $\set{a^2 + b^2 = 1}$ where $1-a = d/2$. This concludes the proof.

\end{proof}

\begin{rem}
A-priori there is no reason to use $N-n = -(n+\alpha)$ in the above proof, and we could have also proceeded with $(N-n) p = -(n+\alpha)$ for some parameter $p > 0$. 
Repeating the above argument, everything boils down to calculating the minimum of:
\begin{equation} \label{eq:Fp}
 F_p(a,b) := \frac{a (( 1-a)^2 + b^2) + (p-2) b^2}{(( 1-a)^2 + b^2)^2} 
\end{equation}
on the unit circle. It turns out that when $p \geq 1$, this minimum is still $-\frac{1}{4}$ (attained at $(a,b) = (-1,0)$), yielding:
\[
\frac{\nabla_{S^n}^2 \abs{\cdot-x}^p}{\abs{\cdot-x}^p} \geq -\frac{p}{4} g .
\]
This would have resulted in an inferior Curvature-Dimension condition $CD(\rho,N)$, with the same curvature $\rho = n-1 - \frac{n+\alpha}{4}$ but with a worse dimension $N = n - \frac{n+\alpha}{p}$. On the other hand, when $p \in (0,1)$, $F_p$ is not bounded below on the unit circle, as seen by setting $b^2 = 1 - a^2$ and letting $a \rightarrow 1$. Consequently, it seems we cannot gain from playing such a game, at least if our goal is to obtain uniform estimates in $\abs{x} < 1$. 
\end{rem}

\begin{rem}
By using $b=0$ in (\ref{eq:F}) or (\ref{eq:Fp}) and letting $a \rightarrow 1$, one sees that $F(a,b)$ is not bounded from above in $\set{a^2 + b^2 < 1}$. This prevents us from applying our results in the range $\alpha < -n$, when $N-n > 0$ in (\ref{eq:LogHess-Spec}).
\end{rem}

\section{Concluding Remarks}  \label{sec:conclude}
 
\subsection{Better Understanding of Harmonic Case}

It is unfortunate that the harmonic case $\alpha=1$ does not satisfy $CD(an,-bn)$ for some constant $a\in (0,1)$ and $b > 0$; as explained in the Introduction, this would have recovered the Schechtman--Schmuckenschl\"{a}ger sub-Gaussian concentration estimate (\ref{eq:sub-Gaussian}). It therefore seems that our current understanding is missing some additional subtle curvature property of the harmonic measure. Perhaps a finer analysis using the forthcoming Graded Curvature-Dimension condition \cite{EMilman-GradedCD} would enable resolving this shortcoming.

\subsection{Beyond $\alpha \in (-1,3n-4)$}
 
We do not know what happens beyond this range for $\alpha$, and moreover, we do not have any clear intuition regarding what to expect. 

It could be that the negative curvature of the measure overcomes the positive curvature of the sphere around $\alpha \simeq 3n$, resulting in a dimensional degradation or perhaps even phase-transition in the behaviour of $\alpha \mapsto \inf_{\abs{x} < 1} \lambda_x^{n,\alpha}$; this would give credence to the (surprising?) $-\frac{1}{4}$ constant appearing in Theorem \ref{thm:intro-main}. On the other hand, this could be an artifact of our proof. 

In the vicinity of $\alpha = -1$ all of our estimates breakdown - we tend to believe that this is an artifact of our (elementary) proof. Just by interpolating between the cases $\alpha = -n$ and $\alpha = 0$, we believe that the spectral-gap remains of the order of $n$ uniformly in $\alpha \in [-n,0]$ and $\abs{x} < 1$. 

And as for $\alpha < -n$ -  we do not know what to expect. 
 
\subsection{Extension to more general norms}

Recall that the proof of Theorem \ref{thm:intro-main} boiled down to estimating from below:
\[
\frac{\nabla_{S^n}^2 \abs{\cdot - x}}{\abs{\cdot-x}} . 
\]
Let us check what happens if we replace the Euclidean norm $\abs{\cdot}$ by a more general (say $C^2$ smooth) one $\norm{\cdot}$ - this is interesting enough even for $x=0$. We would like to have:
\[
\frac{\scalar{\nabla_{S^n}^2 \norm{y} \theta,\theta}}{\norm{y}} = \frac{\scalar{ \nabla^2_{\Real^{n+1}} \norm{y} \theta , \theta} - \scalar{\nabla_{\Real^{n+1}} \norm{y} , y}}{\norm{y}} \geq -(1-\eps) ,
\]
for all $y,\theta \in S^{n}$ with $\scalar{y,\theta} = 0$. But since $\scalar{\nabla_{\Real^{n+1}} \norm{y} , y} = \norm{y}$, we see that the desired condition is that:
\begin{equation} \label{eq:2convex}
\exists \eps > 0 \;\;\; \scalar{ \nabla^2_{\Real^{n+1}}\norm{y} \theta , \theta} \geq \eps \norm{y} \;\;\; \forall y,\theta \in S^{n} \;\; \scalar{y,\theta} = 0 . 
\end{equation}
In such a case, denoting:
\[
\eta^{n,\alpha} := \frac{c_{n,\alpha}}{\norm{y}^{n+\alpha}} d\sigma^n(y) ,
\]
we would have that $(S^n,g,\eta^{n,\alpha})$ satisfies $CD(n-1-(1-\eps)(n+\alpha) , -\alpha)$ for all $\alpha \geq -n$, implying good concentration properties when $\alpha \in (-1 , \frac{n-1}{1-\eps} - n)$. The condition (\ref{eq:2convex}) is reminiscent of the condition for the norm to have a quadratic modulus of convexity \cite{LT-Book-II}, but is not invariant under linear transformations.

\setlinespacing{0.82}
\setlength{\bibspacing}{2pt}

\bibliographystyle{plain}
\bibliography{../ConvexBib}

\end{document}